\documentclass[11pt]{article}
\usepackage[a4paper,total={155mm,237mm},left=27.5mm,top=30mm]{geometry}
\usepackage[T1]{fontenc}
\usepackage{lmodern,microtype}
\usepackage{amssymb,amsthm,amsmath,mathtools}
\usepackage{graphicx,xcolor,leftindex}
\usepackage{authblk}
\usepackage[colorlinks=true,linkcolor=blue!50!black,citecolor=blue!50!black,urlcolor=blue!50!black]{hyperref}

\DeclareMathOperator{\pos}{pos}

\newcommand{\R}{\mathbb{R}}
\newcommand{\ip}[2]{\langle #1,#2\rangle}
\newcommand{\norm}[1]{\lVert #1\rVert}

\newcommand{\eps}{\varepsilon}

\newcommand{\PP}{\mathcal P}
\newcommand{\SOn}[1]{\operatorname{SO}(#1)}
\newcommand{\On}[1]{\operatorname{O}(#1)}
\newcommand{\CC}{\mathcal{C}}
\newcommand{\sphere}[1]{\mathbb{S}^{#1}}
\newcommand{\lin}{\operatorname{lin}}
\DeclareMathOperator{\sgn}{sgn}
\DeclareMathOperator{\diag}{diag}
\DeclareMathOperator{\inter}{int}
\DeclareMathOperator{\relint}{relint}
\newtheorem{theorem}{Theorem}[section]
\newtheorem{proposition}[theorem]{Proposition}
\newtheorem{corollary}[theorem]{Corollary}
\newtheorem{lemma}[theorem]{Lemma}

\hypersetup{pdftitle={Hadwiger's classification theorem on the sphere via signed orthoscheme decompositions},pdfauthor={Dennis Amelunxen and Martin Lotz}}

\title{Hadwiger's classification theorem on the sphere via signed orthoscheme decompositions}
\author{Martin Lotz}
\affil{Mathematics Institute, University of Warwick}

\begin{document}

\maketitle

\begin{abstract}
We give an elementary and combinatorial proof of the spherical Hadwiger theorem, which states that every continuous, rotation-invariant valuation on the space of closed convex cones in $\R^n$ is a linear combination of the conic intrinsic volumes.
Our proof uses a signed orthoscheme decomposition. Its vanishing step requires only integrability of an orthoscheme restriction of the residual valuation, a weaker regularity condition than continuity.
\end{abstract}


\section{Introduction}
Hadwiger's theorem classifies the continuous, rigid-motion-invariant
valuations on convex bodies in Euclidean space as linear combinations of
the intrinsic volumes~\cite{Had57:Vorlesungen,klain:95}. Its spherical
counterpart, asking for the classification of continuous, rotation-invariant
valuations on spherically convex sets, or equivalently on closed convex
cones, was conjectured by McMullen~\cite[Problem~49]{Gruber1979} and was 
only recently settled by Knoerr~\cite[Theorem~A]{Knoerr:Spherical-Hadwiger} and
independently by Wang and Wu~\cite{WangWu:Spherical-Hadwiger}.
The theorem states that every continuous, rotation-invariant valuation on the space of closed convex cones in $\R^n$ is a linear combination of the conic intrinsic volumes.
The conic intrinsic volumes arise naturally in spherical integral
geometry and in applications to optimization, statistics and compressive
sensing, see~\cite{Sch22:Convex-Cones} and the references therein.

In this paper, we give an elementary proof of the spherical Hadwiger theorem
through signed orthoscheme decompositions. In line with our previous expository article~\cite{AmelunxenLotz:Intrinsic-Volumes-Cones},
the proof is based on elementary conic geometry. The vanishing step uses a weaker regularity condition
than continuity, requiring only integrability of an orthoscheme restriction of the residual valuation.

\begin{theorem}[Spherical Hadwiger theorem]\label{thm:main}
Let $n\ge2$. Every continuous $\SOn n$-invariant real valuation $\nu$ on
$\CC(\R^n)$ has the unique representation
\begin{equation}\label{eq:main-classification}
 \nu(C)=\sum_{i=0}^n\nu(L_i)v_i(C),\qquad C\in\CC(\R^n),
\end{equation}
where $L_i$ is any $i$-dimensional linear subspace and $v_i$ is the $i$th
conic intrinsic volume.
\end{theorem}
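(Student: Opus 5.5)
Uniqueness comes first and is immediate. Since $v_i(L_j)=\delta_{ij}$, evaluating any representation $\nu=\sum_i c_iv_i$ at $L_j$ forces $c_j=\nu(L_j)$.

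For existence, set $\mu=\nu-\sum_{i=0}^n\nu(L_i)v_i$. This is again a continuous $\SOn n$-invariant valuation, and it vanishes on every linear subspace. The goal is to show $\mu\equiv0$, by induction on $n$. The restriction of $\mu$ to cones contained in a fixed hyperplane $H$ is a continuous, $\SOn{n-1}$-invariant valuation on $\CC(H)$; rotations of $H$ extend to rotations of $\R^n$, and for $n-1=1$ we also get reflections. It vanishes on all subspaces of $H$, so by induction $\mu$ vanishes on all lower-dimensional cones. The base case $n=1$ (or $n=2$ directly) is handled by hand: cones in $\R^2$ are angular sectors, additivity plus continuity makes $\mu$ of a sector linear in its angle, and vanishing at $L_2$ and at half-planes gives zero. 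So it suffices to show $\mu(C)=0$ for full-dimensional cones. By inclusion–exclusion, the fact that $\mu$ vanishes on lower-dimensional pieces, and continuity, it is enough to treat polyhedral simplicial cones.

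Next I would reduce simplicial cones to orthoschemes using the signed orthoscheme decomposition. Every simplicial cone is a signed sum of orthoscheme cones, modulo lower-dimensional cones; one gets this by dropping perpendiculars iteratively from the apex direction to faces, as in the Euclidean Hill/Schläfli decomposition. Each orthoscheme $O(\alpha_1,\dots,\alpha_{n-1})$ is parametrized by its angles. The function $f(\alpha)=\mu(O(\alpha))$ then satisfies a family of additivity relations. Cutting an orthoscheme along a hyperplane through the appropriate edge splits one angle, so $f$ is additive in each $\alpha_k$ separately, up to terms that are themselves orthoscheme values of lower "depth". Measurability or integrability of $f$, which is all the abstract mentions, should upgrade these Cauchy-type equations to linearity in each variable, with corrections controlled by induction. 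Combined with the vanishing on subspaces and lower-dimensional cones, the relations $\mu(L_n)=0$ and $\mu(\text{half-space})=0$ (half-spaces being decomposable into orthoschemes), and the complementation relation $\mu(C)+\mu(C^\circ)$-type identities obtained from decomposing $\R^n$ into a cone and its polar's "complement", this should force $f\equiv0$.

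The main obstacle is this last vanishing step. One has to show that the functional equations satisfied by $f$, together with integrability and the boundary values, admit only the zero solution. My first attempt would be to integrate $f$ against the uniform measure over rotations of a tiling of the sphere by orthoschemes: averaging $\mu$ over a random rotation of a fixed full decomposition gives $\mu(\R^n)=0$ expressed as an integral of $f$. I would then use the multi-variable additivity to show that $f$ is affine in each $\alpha_k$ with the affine constants determined by the degenerate values $\alpha_k\in\{0,\pi/2\}$, which vanish by induction. Getting the bookkeeping of the correction terms right in dimension $n\ge4$ is where I expect difficulty.
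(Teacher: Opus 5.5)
Uniqueness, the vanishing on lower-dimensional cones and the reduction to simplicial cones are fine. The proposal has a genuine gap, though, exactly where you expect one: you never establish that the orthoscheme function vanishes, and the mechanism you sketch would not work.

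Cutting an orthoscheme cone by a hyperplane through a codimension-two face splits a dihedral angle, but it destroys the mutual orthogonality of the path steps. The pieces are therefore in general not orthoschemes, so no Cauchy equation in the angles closes on $f$, and the ``lower-depth corrections'' are never identified. Averaging a fixed tiling over rotations yields a single scalar relation, which cannot force $f\equiv0$.

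The missing idea is that the apex is a free parameter in the signed orthoscheme decomposition. Project $u$ successively onto the flag of each ordering $\pi$ of the generators, with $Q_\pi$ the Gram--Schmidt frame of that ordering. For every generic $u$ this gives
\[
 \mu(C)=\sum_{\pi\in S_n}\Bigl(\prod_{k=1}^n\sgn\bigl((Q_\pi^Tu)_k\bigr)\Bigr)\psi(|Q_\pi^Tu|),
\]
with $\psi(a)=\mu(C(a))$. Now integrate over $u\in\sphere{n-1}$ and substitute $a=Q_\pi^Tu$. Each term becomes $\int H\,\mathrm{d}\sigma$ with $H(a)=\bigl(\prod_k\sgn a_k\bigr)\psi(|a|)$. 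This $H$ is odd under every coordinate reflection, so it integrates to zero. No functional equation is needed, only integrability of $\psi$ on $\Sigma$, which continuity and boundedness (compactness of the space of cones) provide.

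Two reductions that this argument needs are also missing from your plan.

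First, in the conic setting the signed star identity does not hold modulo lower-dimensional cones alone. For $n=2$ and $u\in-\inter C$, both orthoscheme pieces carry sign $+1$ and tile $-C$ rather than $C$. The signs come from the flip rule $\mu(\pos(v_1,\ldots,-v_i,\ldots,v_n))=-\mu(\pos(v_1,\ldots,v_n))$. That rule requires $\mu$ to vanish on every cone containing a line, not only on half-spaces. Getting this needs a second use of the induction hypothesis, applied to the $\On{n-1}$-invariant valuation $D\mapsto\mu(\ell\oplus D)$ on $\CC(\ell^\perp)$, which vanishes on subspaces.

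Second, you are given only $\SOn n$-invariance. Both your parametrisation of orthoschemes by angles and the reflection cancellation $\mu(D_\eps C(a))=\mu(C(a))$ require $\On n$-invariance, since mirror-image orthoschemes could a priori have different values. The paper establishes this first via Sah's reflection pairing. It dissects a simplicial cone into pieces $B_{ij}$ with $s_{ij}B_{ij}=B_{ji}$ for hyperplane reflections $s_{ij}$. This shows that simple $\SOn n$-invariant valuations are also invariant under orientation-reversing maps.
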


Using a standard reduction, Theorem~\ref{thm:main} is equivalent 
to the statement that every continuous, $\On n$-invariant 
valuation that vanishes on lower-dimensional cones or cones containing a line, vanishes
on all simplicial cones (Theorem~\ref{thm:simplicial-vanishing}). Let $\mu$ denote such a valuation. The remaining proof establishes this
vanishing through orthoschemes.
An orthoscheme cone is generated by the successive orthogonal
projections of an apex onto a complete flag of subspaces. In standard
coordinates, its generators are
\[
 (a_1,0,\ldots,0),\ (a_1,a_2,0,\ldots,0),\ \ldots,\ (a_1,\ldots,a_n).
\]
Write $\psi(a)$ for its value under $\mu$, $\psi(a)=\mu(C(a))$, where $C(a)$ is the
cone generated by the vectors above, with $a\in\R_{>0}^n$. Fix a full-dimensional simplicial cone $C$.
For each permutation $\pi\in S_n$, let $Q_\pi$ be the orthonormal matrix
obtained by Gram--Schmidt from the corresponding ordering of the extreme
rays of $C$. For generic $u\in\sphere{n-1}$, we derive the identity
\begin{equation}\label{eq:intro-flag}
 \mu(C)=\sum_{\pi\in S_n}
 \left(\prod_{k=1}^n\sgn\bigl((Q_\pi^Tu)_k\bigr)\right)\psi(|Q_\pi^Tu|).
\end{equation}
For each permutation $\pi$, reflection of
one coordinate of $Q_\pi^Tu$ preserves its absolute coordinates but
reverses the displayed coefficient. Averaging over the whole sphere
therefore makes each summand vanish, provided the orthoscheme restriction
is integrable. The proof also uses a classical
reflection argument for simplices~\cite{Sah79:Hilberts-Third,KR97:Introduction-Geometric}
to pass from proper rotations to all orthogonal transformations.

The bulk of our arguments are algebraic and combinatorial, with 
the analytical part only requiring the integrability of $\psi$ on the positive part of the unit
sphere. Continuity is a sufficient condition for this, and the condition
can also be deduced from monotonicity.

\paragraph{Relation to previous work and historical remarks}
It is worth pointing out the relationship between our proof and 
the averaging in~\cite{WangWu:Spherical-Hadwiger,lotz2026monotoneinvariantvaluationsconvex}.
The proof in~\cite[Theorem~3.8]{lotz2026monotoneinvariantvaluationsconvex}
uses a signed star subdivision and cancels each fixed-facet apex integral
by reflection in the facet span. In contrast, our flag decomposition iterates the star
identity until every piece is an orthoscheme. This replaces the family of
fixed-facet apex functions by the single parameter function $\psi$, with
the original cone entering only through the orthogonal frames $Q_\pi$.
While both proofs use reflection cancellation, the flag refinement gives it a
uniform coordinate form and restricts the integrability hypothesis to
orthoscheme parameters.

Knoerr's spherical proof~\cite{Knoerr:Spherical-Hadwiger} uses affine
smoothness and gnomonic charts to reduce the problem to the classification
of measurable, translation-invariant valuations on Euclidean polytopes.
Related geometric ingredients occur in the Euclidean result on which it
relies: orthoschemes obtained by successive projections enter a signed
flag representation for simple valuations of degree one, and rotation
averaging produces an invariant odd coefficient function, which must
vanish~\cite[\S4 and Proposition~5.1]{Knoerr:Polytopal-Hadwiger}.
Chen's elementary proof of the Euclidean Hadwiger theorem also uses signed
orthoscheme decompositions obtained by successive
projections~\cite[Lemma~3.3]{Chen:Elementary-Hadwiger}; this decomposition
is invoked in Knoerr's Euclidean
argument~\cite[Lemma~5.2]{Knoerr:Polytopal-Hadwiger}.
Our argument establishes spherical vanishing directly through signed
conic orthoscheme decompositions and apex averaging, requiring only
integrability of the orthoscheme restriction in this step.

Signed orthoscheme decompositions already appear in
Schl\"afli~\cite[\S25, pp.~76--78]{schlaefli:1901}, who writes:
``Im allgemeinen aber kann die Zerlegung auch negative Orthoscheme
enthalten'' (``In general, however, the decomposition may also contain
negative orthoschemes''). His construction follows successive perpendicular
feet along complete flags of faces. Hadwiger~\cite{hadwiger-dissect:1956}
later asked whether every Euclidean simplex admits a finite dissection
into orthoschemes, and explained why the proposed incentre construction
fails when perpendicular feet leave the faces. The problem is solved in
simplex dimensions at most five, with the five-dimensional case established
by Tschirpke~\cite{tschirpke:1994} also in spherical and hyperbolic geometry.
To the best of our knowledge, it remains open in simplex dimension $d\ge6$;
see~\cite[Conjecture~23 and the ensuing discussion, pp.~327--328]{BKKS:2009}.
For spherical simplices in $\sphere{n-1}$, this corresponds to ambient cone
dimension $n\ge7$. Allowing signed pieces
avoids this obstruction and permits the unrestricted apex averaging used
here.

\paragraph{Organization.}
Section~\ref{sec:cones} gives the cone reductions and the $\On n$-$\SOn n$ equivalence,
states Theorem~\ref{thm:simplicial-vanishing}, and deduces the main theorem.
Section~\ref{sec:orth} develops the flag and staircase descriptions of
orthoschemes. Section~\ref{sec:signed-flags} proves the complete signed
flag formula and establishes Theorem~\ref{thm:simplicial-vanishing}.

\paragraph{Notation.}
We write $[n]=\{1,\ldots,n\}$, $\R_+=[0,\infty)$, $\R_{>0}=(0,\infty)$ and
$\R_*=\R\setminus\{0\}$. For a set of vectors, $\pos$ denotes
its nonnegative linear span, and $\lin$ denotes its linear span.
The orthogonal projection onto a subspace $L$ is denoted by $\Pi_L$. 
The symbols $\inter$ and $\relint$ denote interior and relative
interior, respectively. Absolute values of vectors are taken coordinatewise.
Throughout, $\sigma$ is the normalised uniform measure on $\sphere{n-1}$.

\paragraph{Acknowledgements.} The author is grateful to Dennis Amelunxen with whom this projected
originally started, and who provided valuable feedback on the manuscript. 

\section{Convex cones and valuations}\label{sec:cones}
A closed convex cone is a nonempty closed convex set $C\subseteq\R^n$
such that $tC=C$ for every $t>0$. We denote the
space of such cones by $\CC(\R^n)$ and its polyhedral subspace by
$\PP(\R^n)$. A cone is polyhedral if it is generated by finitely many
vectors, and simplicial if the generators can be chosen linearly
independent. The empty list generates $\{0\}$.
A cone is \emph{pointed} if $C\cap(-C)=\{0\}$; the
subspace $C\cap(-C)$ is its \emph{lineality space}. For nonzero cones
we use the Hausdorff distance between their intersections with the unit
sphere, measured with spherical distance. The zero cone is isolated.
In dimension one this gives the discrete topology on
$\{\{0\},\R_+,\R_-,\R\}$. Polyhedral cones are dense in this topology;
see~\cite[Chapter~3]{Sch22:Convex-Cones}.

\subsection{Valuations, subdivisions and intrinsic volumes}
A real \emph{valuation} on $\CC(\R^n)$ satisfies
\[
 \mu(C\cup D)+\mu(C\cap D)=\mu(C)+\mu(D)
\]
whenever $C,D$ and their union are convex cones. The same definition is
used on $\PP(\R^n)$. A valuation is \emph{simple} if $\mu(C)=0$ whenever
$\dim C<n$. In statements about valuations without a continuity
hypothesis, continuity is not implicit in this terminology.

The following fact is the spherical Blaschke selection
theorem~\cite[Theorem~3.1.3]{Sch22:Convex-Cones}, which establishes that 
continuous valuations are bounded.

\begin{lemma}\label{lem:cone-compactness}
The space of nonzero closed convex cones is compact in the spherical
topology. Consequently every continuous real valuation on
$\CC(\R^n)$ is bounded.
\end{lemma}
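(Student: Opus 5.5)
The lemma has two parts. The first is a compactness statement. The second is a formal consequence of it. For the compactness I would pass to the unit sphere. A nonzero closed convex cone $C$ is determined by the compact set $C\cap\sphere{n-1}$. The topology on nonzero cones is, by definition, the Hausdorff metric on these traces, measured with spherical distance; this is equivalent to the Hausdorff metric with chordal distance, since the two distances are uniformly comparable on the sphere.

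The plan for compactness has two steps.

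First, I will invoke Blaschke's selection theorem for nonempty compact subsets of the compact metric space $\sphere{n-1}$. The hyperspace of nonempty compact subsets of a compact metric space is compact in the Hausdorff metric. So every sequence $C_k$ of nonzero cones has a subsequence whose traces $K_k=C_k\cap\sphere{n-1}$ converge to some nonempty compact $K\subseteq\sphere{n-1}$.

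Second, I will check that $K$ is again the trace of a nonzero closed convex cone, namely $C=\pos K$. Closedness of $\pos K$ follows from the fact that $K$ is compact and does not contain $0$. Convexity and the identity $C\cap\sphere{n-1}=K$ come from limits. If $x,y\in K$ and $z=(sx+ty)/\norm{sx+ty}$ with $sx+ty\neq0$, then $x,y$ are limits of $x_k,y_k\in K_k$. Hence $z$ is the limit of the normalised combinations $z_k=(sx_k+ty_k)/\norm{sx_k+ty_k}\in K_k$, so $z\in K$. The case $sx+ty=0$ gives the zero vector, which is harmless. Thus $\pos K\cap\sphere{n-1}=K$, and the subsequence converges to a nonzero cone. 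This is exactly the content of \cite[Theorem~3.1.3]{Sch22:Convex-Cones}, so I may also simply cite it. Checking that the limit stays in the class is the only step with content, and it is routine.

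For boundedness, let $\nu$ be continuous on $\CC(\R^n)$. Restricted to the compact space of nonzero cones, $\nu$ attains a maximum and a minimum, so $|\nu|$ is bounded there. The zero cone is an isolated point and contributes the single value $\nu(\{0\})$. Hence $\sup_{C}|\nu(C)|<\infty$. I expect no real obstacle; the only care needed is the remark that the topology is metrizable, so that sequential compactness gives compactness.
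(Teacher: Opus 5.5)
Your proposal is correct and follows the paper's route. The paper cites the spherical Blaschke selection theorem \cite[Theorem~3.1.3]{Sch22:Convex-Cones} for compactness and deduces boundedness from continuity on a compact space, with the zero cone isolated; you additionally prove the content of that citation (compactness of the hyperspace of compact subsets of $\sphere{n-1}$, plus closedness of the class of traces of closed convex cones under Hausdorff limits), and that argument is sound.
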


For simple valuations, we have the following additivity property.
It follows by repeated application of the valuation property, and 
standard facts about simplicial subdivisions.

\begin{lemma}\label{lem:subdivision-additivity}
Let $\mu$ be a simple valuation on $\PP(\R^n)$. If a polyhedral cone
$C$ is subdivided into finitely many polyhedral cones $C_1,\ldots,C_m$
with pairwise disjoint interiors, then
\begin{equation}\label{eq:inclusion-exclusion}
 \mu(C)=\sum_{j=1}^m\mu(C_j).
\end{equation}
Every polyhedral cone admits a simplicial subdivision.
\end{lemma}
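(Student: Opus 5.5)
The plan is to prove the identity~\eqref{eq:inclusion-exclusion} in two stages: first for a subdivision into two pieces cut by a hyperplane, and then for general subdivisions by passing to a common refinement by hyperplane arrangements. The final sentence, on the existence of simplicial subdivisions, is handled separately by a pulling (or starring) construction.

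\emph{Hyperplane cuts.} Let $H$ be a linear hyperplane with closed halfspaces $H^\pm$, and let $D$ be a polyhedral cone. Then $D\cap H^+$, $D\cap H^-$ and their union $D$ are convex cones, and their intersection is $D\cap H$. The valuation property gives
\[
\mu(D)+\mu(D\cap H)=\mu(D\cap H^+)+\mu(D\cap H^-).
\]
Since $\dim(D\cap H)\le n-1<n$ and $\mu$ is simple, the term $\mu(D\cap H)$ vanishes. Iterating, for any finite family of hyperplanes $H_1,\dots,H_r$, the value $\mu(D)$ equals the sum of $\mu$ over the full-dimensional cells $D\cap\bigcap_k H_k^{\epsilon_k}$ with $\epsilon\in\{\pm\}^r$. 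The lower-dimensional cells contribute zero.

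\emph{General subdivisions.} Let $H_1,\dots,H_r$ be the union of all facet hyperplanes of all the $C_j$. The arrangement cuts $C$ into full-dimensional cells $F$, and by the first stage $\mu(C)=\sum_F\mu(F)$. Each cell $F$ has interior disjoint from every facet hyperplane of every $C_j$. Hence $\inter F$ is contained either in $\inter C_j$ or in the complement of $C_j$. Since the $C_j$ cover $C$ and have pairwise disjoint interiors, each $F$ lies in exactly one $C_j$. The cells contained in $C_j$ are exactly the full-dimensional cells of the same arrangement restricted to $C_j$, so applying the first stage again gives $\mu(C_j)=\sum_{F\subseteq C_j}\mu(F)$. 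Summing over $j$ yields~\eqref{eq:inclusion-exclusion}.

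Two details need care. First, the cells genuinely partition $C$ up to boundaries: cells lying in $\partial C_j$ are lower-dimensional, and $\mu$ vanishes on them. Second, if $C$ contains a line, it is still a polyhedral cone and the argument is unchanged. Cones of dimension less than $n$ are trivial, since then both sides of~\eqref{eq:inclusion-exclusion} vanish.

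\emph{Simplicial subdivision.} If $C$ is pointed and $\dim C\ge 2$, pick a ray $r$ in $\relint C$. Inductively subdivide each facet $G$ of $C$ simplicially, which is possible because $\dim G<\dim C$. Then take the cones $\pos(\sigma\cup r)$ over all simplicial pieces $\sigma$ of all facets. These are simplicial because $r\notin\lin G$, they cover $C$ because $C$ is pointed, and they have disjoint interiors. If $C$ has a nontrivial lineality space, first cut it into pointed cones with finitely many hyperplanes, for example the coordinate orthants intersected with $C$ after a generic rotation. Each resulting piece is pointed, and the construction above applies to each.

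I expect the main obstacle to be the common-refinement step, namely verifying that every arrangement cell lies in a single $C_j$. The cleanest route is the interior-disjointness argument given above.
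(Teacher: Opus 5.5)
Your proof is correct. The paper disposes of this lemma in one sentence (``repeated application of the valuation property, and standard facts about simplicial subdivisions''), so what you have written fills in an argument the paper does not give, rather than redoing one.

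If that sentence is read as inclusion--exclusion over $C_1,\ldots,C_m$, with intersections of distinct pieces lower-dimensional and hence killed by simplicity, then taken literally it needs $\mu$ on non-convex sets such as $C_1\cup C_2$. That is, it needs an extension of $\mu$ to finite unions of polyhedral cones. This is a separate extension theorem (of the kind due to Volland and Groemer), not something immediate from the two-set identity. Your hyperplane-cut argument sidesteps this. Every use of the valuation identity involves $D\cap H^+$ and $D\cap H^-$, whose union $D$ is convex. The common refinement by all facet hyperplanes then reduces both $\mu(C)$ and each $\mu(C_j)$ to sums over the same full-dimensional cells, for arbitrary subdivisions that need not be face-to-face. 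The cost is only the bookkeeping you already did. The gain is that the lemma rests solely on the valuation identity as the paper defines it.

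Three small points to tidy:
\begin{itemize}
\item Pieces $C_j$ with $\dim C_j<n$ have no facet hyperplanes, so discard them first. They contribute $0$, and the full-dimensional pieces still cover $C$, since their union is closed and contains $\inter C$ minus finitely many hyperplanes.
\item You only assert that the cones $\pos(\sigma\cup r)$ have disjoint interiors. This follows because, for $y$ in the interior of $\pos(\sigma\cup r)$, the point where the ray $y-tr$, $t\ge0$, leaves $C$ lies in $\relint\sigma$ and so determines $\sigma$. This works even if the facet subdivisions do not agree along common ridges.
\item No generic rotation is needed in the last step, since $C\cap O$ is pointed for every closed orthant $O$.
\end{itemize}
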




The conic intrinsic volumes $v_0,\ldots,v_n$ are special continuous
$\On n$-invariant valuations, and satisfy
\begin{align}
 v_i(L_j)&=\delta_{ij}\quad \text{if } \dim L_j=j,\label{eq:iv-subspaces}\\
 v_n(C)&=\sigma(C\cap\sphere{n-1}),\qquad
 v_i(C)\ge0,\qquad \sum_{i=0}^n v_i(C)=1.\label{eq:iv-volume}
\end{align}
They are intrinsic: an isometric embedding into a larger ambient space
does not change their values, with the extra indices assigned value
zero. These facts follow, for example, from the conic Steiner formula;
see~\cite{AmelunxenLotz:Intrinsic-Volumes-Cones,Sch22:Convex-Cones}.

In dimension one an $\On1$-invariant valuation is determined by its
values on $\{0\}$ and $\R$, since
\[
 2\mu(\R_+)=\mu(\{0\})+\mu(\R).
\]
This proves the one-dimensional $\On1$ classification. The $\SOn1$
version fails because the two rays need not have equal values.

\subsection{Reflection pairing}\label{subsec:reflection-pairing}
The following conic form of the classical incentre dissection is the
reflection-pairing ingredient from scissors congruence; see
Sah~\cite{Sah79:Hilberts-Third} and
Klain--Rota~\cite[Proposition~8.3.1 and the remark following Theorem~11.2.1]{KR97:Introduction-Geometric}. The proof, taken from~\cite[Lemma~3.3]{lotz2026monotoneinvariantvaluationsconvex},
is included for completeness.

\begin{lemma}\label{lem:sah-dissection}
Let $n\ge2$. Every full-dimensional pointed simplicial cone $C$ has a subdivision
into $n(n-1)$ simplicial cones $B_{ij}$, $i\ne j$, such that a hyperplane
reflection interchanges $B_{ij}$ and $B_{ji}$.
\end{lemma}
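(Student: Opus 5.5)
We adapt the classical incentre dissection of a simplex to cones. Let $C=\pos\{x_1,\ldots,x_n\}$ with $x_1,\ldots,x_n$ linearly independent. Then $C$ has exactly $n$ facets $F_1,\ldots,F_n$, where $F_i$ is spanned by all generators except $x_i$, and each facet lies in a hyperplane $H_i=\lin F_i$ with unit inner normal $u_i$. Because the normals $u_1,\ldots,u_n$ are linearly independent, there is a direction $c\in\inter C$ making equal angles with all facet hyperplanes, namely the solution of $\ip{c}{u_i}=1$ for all $i$. This $c$ lies in $\inter C$ because $C=\{x:\ip{x}{u_i}\ge0\ \forall i\}$. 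The ray $\R_+c$ is the conic incentre, and we put $r_i=\Pi_{H_i}c$ for the foot of the perpendicular from $c$ to $H_i$.

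The first step is to show that each foot lies in the relative interior of its facet, $r_i\in\relint F_i$. Here $r_i=c-u_i$, since $\ip{c}{u_i}=1$ and $\norm{u_i}=1$. For $j\ne i$ we get $\ip{r_i}{u_j}=1-\ip{u_i}{u_j}>0$, because distinct unit normals satisfy $\ip{u_i}{u_j}<1$. Hence $r_i$ satisfies all the other facet inequalities strictly while lying on $H_i$, which places it in $\relint F_i$. This positivity is exactly what fails for circumcentres, and it is the key step. It holds for incentres without any acuteness assumption, because the inradius direction is strictly interior.

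Next we define the pieces. For $i\ne j$, the facets $F_i$ and $F_j$ meet in the ridge $R_{ij}=F_i\cap F_j$, which is spanned by the $n-2$ generators $x_k$ with $k\ne i,j$. We set
\[
 B_{ij}=\pos\bigl(\{c,r_i\}\cup\{x_k:k\ne i,j\}\bigr).
\]
This cone has $n$ generators and is simplicial; it is the cone over the simplex with vertices $c$, $r_i$ and the ridge. To see that these pieces subdivide $C$, we first star-subdivide $C$ from $c$ into the cones $\pos(\{c\}\cup F_i)$; this works because $c\in\inter C$. We then subdivide each facet $F_i$ from $r_i\in\relint F_i$ into the cones $\pos(\{r_i\}\cup R_{ij})$, $j\ne i$, which is valid because $F_i$ is a simplicial $(n-1)$-cone whose facets are exactly the $R_{ij}$. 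Coning each of these with $c$ gives the $B_{ij}$. Interiors are pairwise disjoint, since the pieces come from nested star subdivisions of cones with disjoint interiors, and the count is $n(n-1)$. When $n=2$ the ridges are $\{0\}$ and $B_{ij}=\pos\{c,r_i\}$, which is consistent.

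Finally, the reflection. Let $s_{ij}$ be the reflection in the hyperplane $\lin(\{c\}\cup R_{ij})$. This is a hyperplane because $c\notin\lin R_{ij}$, as $c$ is interior. The reflection fixes $c$ and $R_{ij}$ pointwise, so it remains to show $s_{ij}r_i=r_j$. The vector $r_i-r_j=u_j-u_i$ is orthogonal to $R_{ij}$, since both normals are. It is also orthogonal to $c$, since $\ip{c}{u_j-u_i}=0$. Moreover $r_i+r_j=2c-u_i-u_j$ lies in the hyperplane: it is orthogonal to $u_j-u_i$ because $\norm{u_i}=\norm{u_j}$, and the orthogonal complement of $u_j-u_i$ within $\lin\{c,u_i,u_j\}\oplus\ldots$ is precisely $\lin(\{c\}\cup R_{ij})$ by a dimension count. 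Therefore $s_{ij}$ swaps $r_i$ and $r_j$ and maps $B_{ij}$ onto $B_{ji}$. The only degenerate case is $u_i=u_j$, which is impossible for distinct facets, so $r_i\ne r_j$ and the reflection is well defined.
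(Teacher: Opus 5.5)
Your proof is correct and is essentially the paper's. Your incentre direction $c$ with $\langle c,u_i\rangle=1$ is exactly the paper's $z=\sum_k\|y_k\|v_k$, your feet $r_i=c-u_i$ are its $z_i=z-\nu_i$, and the nested star subdivisions and the pieces $B_{ij}$ coincide. The only cosmetic difference is in the reflection: you take the hyperplane $\lin(\{c\}\cup R_{ij})$ and check the swap via $r_i-r_j$ and $r_i+r_j$, whereas the paper reflects in $(\nu_i-\nu_j)^\perp$, which swaps the two normals and fixes $z$ and the $v_k$. These are the same hyperplane by your dimension count, whose garbled ``$\oplus\ldots$'' phrasing you should tidy.
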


\begin{proof}
Write $C=\pos(v_1,\ldots,v_n)$ for a basis, let $y_1,\ldots,y_n$ be
its dual basis ($\ip{y_i}{v_j}=\delta_{ij}$), and define
\[
 \nu_i=\frac{y_i}{\|y_i\|},\qquad z=\sum_k\|y_k\|v_k,
 \qquad z_i=z-\nu_i.
\]
Then $z\in\inter C$, $\langle\nu_i,z\rangle=1$, and
\[
 \langle y_k,z_i\rangle
 =\|y_k\|\bigl(1-\langle\nu_k,\nu_i\rangle\bigr).
\]
This is zero for $k=i$ and positive otherwise, so $z_i$ lies in the
relative interior of the facet $F_i=\pos(v_k:k\ne i)$.
Star subdivision at $z$, followed by star subdivision of each facet
at $z_i$, gives the decomposition into the union of cones
\[
 B_{ij}=\pos(z,z_i,v_k:k\notin\{i,j\}),\qquad i\ne j.
\]
For $n=2$ the second subdivision is the identity on each ray.
Reflection $s_{ij}$ in $(\nu_i-\nu_j)^\perp$ interchanges $\nu_i$ and
$\nu_j$, fixes $z$ and the remaining generators, and therefore sends
$z_i$ to $z_j$. Thus $s_{ij}B_{ij}=B_{ji}$.
\end{proof}


\begin{proposition}\label{prop:polyhedral-reflections}
For $n\ge2$, every simple $\SOn n$-invariant valuation $\mu$ on
$\PP(\R^n)$ is $\On n$-invariant.
\end{proposition}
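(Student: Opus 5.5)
Since $\On n$ is generated by $\SOn n$ together with any single reflection, it suffices to fix one hyperplane reflection $s$ and show $\mu(sC)=\mu(C)$ for every polyhedral cone $C$. Moreover, for every reflection $s'$ the product $s's\in\SOn n$, so $\mu(s'D)=\mu(s D)$ for all $D$. Hence it is enough to show $\mu(s'C)=\mu(C)$ for some reflection $s'$ that may depend on the piece under consideration. Concretely, $\mu\circ s=\mu\circ s'$ for all reflections $s,s'$, and the goal is $\mu(sC)=\mu(C)$.

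First reduce to full-dimensional pointed simplicial cones. If $\dim C<n$, both values vanish by simplicity. Otherwise, Lemma~\ref{lem:subdivision-additivity} subdivides $C$ into simplicial cones $C_1,\ldots,C_m$, and $sC$ is subdivided by the $sC_j$. Additivity then reduces the claim to each full-dimensional $C_j$. A full-dimensional simplicial cone is generated by a basis, so it is pointed, and Lemma~\ref{lem:sah-dissection} applies.

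For such $C$, take the Sah dissection $C=\bigcup_{i\ne j}B_{ij}$ with reflections $s_{ij}$ satisfying $s_{ij}B_{ij}=B_{ji}$. The image $sC$ is dissected by the cones $sB_{ij}$. By additivity and the fact that $\mu\circ s=\mu\circ s_{ij}$,
\[
\mu(sC)=\sum_{i\ne j}\mu(sB_{ij})=\sum_{i\ne j}\mu(s_{ij}B_{ij})=\sum_{i\ne j}\mu(B_{ji})=\mu(C).
\]
The last equality uses additivity once more, since $(i,j)\mapsto(j,i)$ is a bijection of the index set.

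The only step needing care is the claim that the $B_{ij}$ have pairwise disjoint interiors and cover $C$, so that Lemma~\ref{lem:subdivision-additivity} applies. This is provided by the construction in Lemma~\ref{lem:sah-dissection}: a star subdivision of $C$ at the interior point $z$, followed by star subdivisions of the facets at their relative interior points $z_i$. We also need each $B_{ij}$ to be full-dimensional or else harmless; any degenerate pieces have $\mu=0$ by simplicity, so they do not affect the sums. The case $n=2$ is covered by the same argument, with the $B_{ij}$ being the two halves of the angle.
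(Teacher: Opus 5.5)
Your proof is correct and follows the paper's argument. Both use Sah's dissection into pairs $B_{ij},B_{ji}$ interchanged by the reflections $s_{ij}$, additivity for simple valuations, and the fact that a reflection composed with $s_{ij}$ lies in $\SOn n$ (you write this as $\mu\circ s=\mu\circ s_{ij}$, the paper as $gs_{ij}\in\SOn n$), then extend to all polyhedral cones by simplicial subdivision.
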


\begin{proof}
For a full-dimensional simplicial cone $C$ and an orientation-reversing
$g$, Lemmas~\ref{lem:subdivision-additivity} and~\ref{lem:sah-dissection}
give
\[
 \mu(gC)=\sum_{i\ne j}\mu(gB_{ij})
 =\sum_{i\ne j}\mu((gs_{ij})B_{ji})
 =\sum_{i\ne j}\mu(B_{ji})=\mu(C),
\]
since $gs_{ij}\in\SOn n$. Subdivision proves the assertion for every
polyhedral cone.
\end{proof}

\begin{corollary}\label{thm:SO=O}
For $n\ge2$, every $\SOn n$-invariant valuation $\mu$on $\PP(\R^n)$ is
$\On n$-invariant. The same holds for continuous valuations on
$\CC(\R^n)$.
\end{corollary}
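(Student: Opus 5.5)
We reduce the statement to Proposition~\ref{prop:polyhedral-reflections} by induction on $n$, peeling off the non-simple part of $\mu$ through restriction to hyperplanes. Fix an $\SOn n$-invariant valuation $\mu$ on $\PP(\R^n)$ and an orientation-reversing $g\in\On n$; it suffices to show $\mu(gC)=\mu(C)$ for every polyhedral cone $C$.

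\textbf{Lower-dimensional cones.} If $\dim C<n$, then $C$ lies in a hyperplane $H$. Let $r_H$ be the reflection in $H$; it fixes $C$ pointwise. Since $gr_H\in\SOn n$, we get $\mu(gC)=\mu(gr_HC)=\mu(C)$. This settles every lower-dimensional cone directly, with no induction needed. (The dimension one group $\On1$ never appears, since $n\ge2$.)

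\textbf{Full-dimensional cones.} Write $\mu=\mu_0+\mu_1$, where $\mu_1:=\mu\circ g-\mu$ is again a valuation, and $\mu\circ g$ is $\SOn n$-invariant because $\SOn n$ is normal in $\On n$. By the previous paragraph, $\mu_1$ vanishes on all lower-dimensional cones, so it is simple and $\SOn n$-invariant. Proposition~\ref{prop:polyhedral-reflections} then makes $\mu_1$ $\On n$-invariant. Using $g^2\in\SOn n$,
\[
 \mu_1(gC)=\mu(g^2C)-\mu(gC)=\mu(C)-\mu(gC)=-\mu_1(C),
\]
while $\On n$-invariance gives $\mu_1(gC)=\mu_1(C)$. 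Hence $\mu_1=0$, which is exactly $\mu\circ g=\mu$. Since $\SOn n$ together with $g$ generates $\On n$, this proves the polyhedral statement.

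\textbf{Continuous valuations on $\CC(\R^n)$.} Restrict $\nu$ to $\PP(\R^n)$; the restriction is an $\SOn n$-invariant valuation, so $\nu(gP)=\nu(P)$ for all polyhedral $P$. For a general cone $C$, choose polyhedral $P_k\to C$ (polyhedral cones are dense). Then $gP_k\to gC$, because $g$ is an isometry of the sphere, and continuity gives $\nu(gC)=\nu(C)$.

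\textbf{Main obstacle.} The only delicate point is checking that $\mu\circ g-\mu$ is a genuine simple $\SOn n$-invariant valuation, so that Proposition~\ref{prop:polyhedral-reflections} applies. The valuation property is preserved because $g$ maps unions and intersections of cones to unions and intersections. Simplicity is exactly the hyperplane-reflection argument for lower-dimensional cones. The zero cone is lower-dimensional and is covered by that same argument.
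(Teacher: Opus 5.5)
Your proof is correct and is essentially the paper's argument. The paper sets $\eta(C)=\mu(C)-\mu(\rho C)$ for a fixed hyperplane reflection $\rho$, shows $\eta$ is simple and $\SOn n$-invariant by the same fixed-hyperplane trick, applies Proposition~\ref{prop:polyhedral-reflections}, concludes from $\eta(\rho C)=-\eta(C)$, and extends by density and continuity. You do exactly this with a general orientation-reversing $g$ and $g^2\in\SOn n$; your opening mention of an induction on $n$ and the summand $\mu_0$ are never used and can be dropped.
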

\begin{proof}
Fix a hyperplane reflection $\rho$ and let
$\eta(C)=\mu(C)-\mu(\rho C)$. This is $\SOn n$-invariant, since that
group is normal in $\On n$. It is simple: if $C\subseteq H$ for a
hyperplane $H$, reflection $\rho_H$ fixes $C$ pointwise, so
$\mu(\rho C)=\mu((\rho\rho_H)C)=\mu(C)$.
Proposition~\ref{prop:polyhedral-reflections} makes $\eta$ reflection
invariant on polyhedral cones. But $\eta(\rho C)=-\eta(C)$, so $\eta=0$
there. This proves the polyhedral case. Continuity and polyhedral
approximation give the statement on all cones.
\end{proof}

\subsection{The residual valuation}
Our results make use of the following condition:
\begin{equation}\tag{A}\label{eq:residual}
 \mu(C)=0\quad\text{if }\dim C<n\text{ or }C\text{ contains a line}.
\end{equation}

\begin{theorem}\label{thm:simplicial-vanishing}
Let $n\ge2$. Every continuous $\On n$-invariant real valuation $\mu$ on
$\CC(\R^n)$ satisfying \eqref{eq:residual} vanishes on every pointed
simplicial cone.
\end{theorem}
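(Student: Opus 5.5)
We follow the route sketched in the introduction: reduce everything to the single orthoscheme function $\psi(a)=\mu(C(a))$, $a\in\R_{>0}^n$, prove the signed flag identity \eqref{eq:intro-flag} for a fixed full-dimensional pointed simplicial cone $C$, and then average over $u\in\sphere{n-1}$.

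\emph{Preliminaries.} Since $\mu$ vanishes on lower-dimensional cones, it is simple. Hence Lemma~\ref{lem:subdivision-additivity} applies on $\PP(\R^n)$, and $\mu$ is additive under subdivisions. Moreover $\psi(a)$ depends only on the ray through $a$, so $\psi$ is a function on $\sphere{n-1}\cap\R_{>0}^n$. It is bounded by Lemma~\ref{lem:cone-compactness}, and measurable because it is continuous there, as $C(a)$ depends continuously on $a$. It is convenient to extend $\psi$ to $a\in\R_*^n$ by the signed convention
\[
\mu(\pos(a_1e_1,\,a_1e_1+a_2e_2,\,\ldots))=\Bigl(\prod_k\sgn a_k\Bigr)\psi(|a|).
\]
This rests on two facts. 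First, the cone generated by the staircase vectors with signs is the image of $C(|a|)$ under the diagonal reflection $\diag(\sgn a_k)$, and $\On n$-invariance gives it the value $\psi(|a|)$. Second, the sign is explained by the local orientation bookkeeping that enters the signed decomposition.

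\emph{Step 1: the signed star identity.} For a full-dimensional simplicial cone $\pos(w_1,\ldots,w_n)$ and a generic point $p$, we claim
\[
\mu(\pos(w_1,\ldots,w_n))=\sum_i \epsilon_i\,\mu(\pos(p,w_j:j\ne i)).
\]
Here $\epsilon_i=\pm1$ according to whether $p$ lies on the same side of the facet hyperplane as $w_i$. We prove this as an identity of indicator functions modulo lower-dimensional sets, i.e.\ as a signed subdivision. It follows from Lemma~\ref{lem:subdivision-additivity}, using the fact that any configuration can be rearranged into genuine subdivisions of a common refinement. Cones containing a line or lower-dimensional pieces contribute zero by \eqref{eq:residual}.

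\emph{Step 2: iterate along a flag.} Let $V_k=\lin(v_{\pi(1)},\ldots,v_{\pi(k)})$. Apply Step~1 with apex $p=u$, then inside each facet apply the same identity within its span, using apex $\Pi_{V_{n-1}}u$, and so on, projecting successively. Iterating produces cones generated by $\Pi_{V_1}u,\Pi_{V_2}u,\ldots,\Pi_{V_n}u=u$. In the frame $Q_\pi$ these are exactly the staircase vectors of $Q_\pi^Tu$, with the accumulated sign equal to $\prod_k\sgn((Q_\pi^Tu)_k)$. This yields \eqref{eq:intro-flag}.

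The main obstacle is making the lower-dimensional star identities compatible with $\mu$, which is only defined on full-dimensional cones. We handle this by coning: each lower-dimensional identity is applied after taking the join with the fixed remaining generator. Equivalently, we prove the whole formula as a statement about signed indicator functions of cones, modulo cones of dimension $<n$ and cones with lines, and only then apply $\mu$. The sign bookkeeping, namely that the facet-side sign at level $k$ equals $\sgn((Q_\pi^Tu)_k)$, is checked via Gram--Schmidt: the $k$th coordinate measures the component of the $k$th projection orthogonal to $V_{k-1}$ relative to the positively oriented $k$th frame vector. The identity holds for $\sigma$-almost every $u$, namely whenever no coordinate vanishes.

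\emph{Step 3: averaging.} Integrate \eqref{eq:intro-flag} against $\sigma(du)$. The left side is constant in $u$. For each fixed $\pi$, substitute $u=Q_\pi w$; since $\sigma$ is $\On n$-invariant, the summand becomes
\[
\int \prod_k\sgn(w_k)\,\psi(|w|)\,\sigma(dw).
\]
The reflection $w_1\mapsto-w_1$ preserves $\sigma$ and $|w|$ but flips the sign, so this integral equals its own negative. Integrability holds because $\psi$ is bounded and measurable. Hence the integral is $0$, and $\mu(C)=0$.

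Finally, pointed simplicial cones of lower dimension vanish by \eqref{eq:residual}.
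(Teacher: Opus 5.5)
Your route is the paper's: the signed star identity, its iteration along the flags of all $n!$ orderings, reduction of the terminal cones to $\psi(|Q_\pi^Tu|)$ by $\On n$-invariance, and reflection cancellation after averaging, with integrability from boundedness and continuity. Your coning of lower-dimensional identities achieves what the paper does by applying Lemma~\ref{lem:star} to the full-dimensional intermediate cones, with the apex on a facet hyperplane and $\sgn 0=0$. Your Gram--Schmidt sign check in Step~2 is correct. Two things need repair.

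First, your displayed ``signed convention'' is false as a statement about $\mu$. Since $C(a)=\diag(\sgn a_k)\,C(|a|)$, invariance gives $\mu(C(a))=\psi(|a|)$ with no sign, as your own first fact says. For $a=(1,-1)$ the display would force $\psi(1,1)=-\psi(1,1)$, so it presupposes the vanishing you are trying to prove. The factor $\prod_k\sgn((Q_\pi^Tu)_k)$ in \eqref{eq:intro-flag} comes solely from the star coefficients. Attaching it also to the value of $\mu$ on the terminal cone would square the signs away and destroy the cancellation in Step~3. Use $(\prod_k\sgn a_k)\psi(|a|)$ only as the definition of an auxiliary function (the paper's $H$).

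The genuine gap is Step~1, which you assert rather than prove, and whose justification fails as stated. For general sign patterns the identity is not a signed subdivision modulo lower-dimensional sets. Take $C=\pos(e_1,e_2)$ and $p=-e_1-e_2$: the identity reads $\mu(C)=-\mu(\pos(p,e_2))-\mu(\pos(e_1,p))$, but the three cones tile $\R^2$, so it holds only because $\mu(\R^2)=0$. A common refinement together with Lemma~\ref{lem:subdivision-additivity} only yields identities that hold exactly modulo lower-dimensional sets. To invoke \eqref{eq:residual} you must exhibit, for every sign pattern, the line-containing cones that absorb the discrepancy, and you do not.

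The missing idea is the flip identity of Lemma~\ref{lem:flip}. The union $\pos(\ldots,v_i,\ldots)\cup\pos(\ldots,-v_i,\ldots)=\pos(v_j:j\ne i)+\R v_i$ is convex and contains a line, and the intersection is lower-dimensional. So by the valuation property and \eqref{eq:residual}, reversing one generator negates $\mu$. Reversing the generators with $\lambda_i<0$ turns $p$ into a positive combination, where the ordinary star subdivision and Lemma~\ref{lem:subdivision-additivity} apply. Put $s_j=\sgn\lambda_j$ on $J=\{j:\lambda_j\ne0\}$; comparing the sign products $\prod_{j\in J}s_j$ and $\prod_{j\in J\setminus\{i\}}s_j$ gives exactly the coefficient $\sgn\lambda_i$. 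Stating the lemma with $\sgn 0=0$ also removes your ``main obstacle'': $\mu$ is defined on all cones and merely vanishes on lower-dimensional ones, so the lemma applies directly to the full-dimensional intermediate cones without coning.
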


\begin{proof}[Proof of Theorem~\ref{thm:main} assuming Theorem~\ref{thm:simplicial-vanishing}]
By Corollary~\ref{thm:SO=O}, it suffices to prove the $\On n$
classification. We use induction on $n$, the one-dimensional case having been
established above. Suppose the classification holds in all smaller
dimensions. For a continuous $\On n$-invariant valuation $\nu$, set
\[
 \mu=\nu-\sum_{i=0}^n\nu(L_i)v_i.
\]
The valuation $\mu$ is continuous, invariant, and zero on every subspace
by \eqref{eq:iv-subspaces}. Its restriction to a proper subspace $V$
is $\On{\dim V}$-invariant and by the induction hypothesis, that
restriction vanishes. The zero-dimensional restriction is zero by normalization. Thus
$\mu$ is simple.

For a fixed line $\ell\subseteq\R^n$, define
$\eta_\ell(D)=\mu(\ell\oplus D)$ on $\CC(\ell^\perp)$.
Orthogonal direct sum with $\ell$ preserves unions, intersections and
continuity, so $\eta_\ell$ is a continuous $\On{n-1}$-invariant
valuation. It vanishes on every subspace, since $\ell\oplus L$ is a
subspace whenever $L\subseteq\ell^\perp$ is. Induction gives
$\eta_\ell=0$. Every cone $C$ containing a line $\ell$ satisfies
\[
 C=\ell\oplus(C\cap\ell^\perp),
\]
so $\mu$ vanishes on such cones and hence satisfies \eqref{eq:residual}.

Theorem~\ref{thm:simplicial-vanishing} now gives zero on every pointed
simplicial cone. By Lemma~\ref{lem:subdivision-additivity}, $\mu$ vanishes
on all polyhedral cones. Their density and continuity give $\mu=0$ on
$\CC(\R^n)$, proving \eqref{eq:main-classification}. Evaluation on
subspaces, using \eqref{eq:iv-subspaces}, proves uniqueness.
\end{proof}

\section{Orthoschemes}\label{sec:orth}
An orthoscheme, or path simplex, is determined by a sequence of mutually
orthogonal steps. It generalises a right triangle to higher-dimensional
simplices~\cite[\S25]{schlaefli:1901}. We use the cones generated by the
successive endpoints of those steps, called orthoscheme cones. 


\subsection{Flags and staircase representations}\label{subsec:orthosch-basic}
A \emph{complete flag} is a sequence
$\mathcal L=(L_0,L_1,\ldots,L_n)$ with
\[
 \{0\}=L_0\subset L_1\subset\cdots\subset L_n=\R^n,
 \qquad\dim L_i=i.
\]
For an apex $u\ne0$, the corresponding \emph{orthoscheme cone} is
\begin{equation}\label{eq:flag-orthoscheme}
 C_{\mathcal L}(u)=\pos(\Pi_{L_1}u,\ldots,\Pi_{L_n}u).
\end{equation}
The pair $(\mathcal L,u)$ is \emph{generic} if
$\Pi_{L_i}u\notin L_{i-1}$ for every $i$. For the standard coordinate
flag we write
\begin{equation}\label{eq:C(a)}
 x_i(a)=\sum_{j=1}^i a_je_j,\qquad
 C(a)=\pos(x_1(a),\ldots,x_n(a)).
\end{equation}
Here $C(a)$ is full-dimensional exactly when $a\in\R_*^n$. Note that if the flag is given by an orthonormal basis $(q_1,\dots,q_n)$,
\begin{equation*}
  L_i = \lin(q_1,\dots,q_i), \quad i\in \{1,\dots,n\},
\end{equation*}
and if $Q=[q_1 \cdots q_n]\in \On n$, then $C_{\mathcal L}(u)=QC(Q^Tu)$. 
The following lemma gives a convenient characterisation of orthoschemes.

\begin{lemma}[Staircase characterisation]\label{lem:staircase-representation}
For a basis $x_1,\ldots,x_n$ of $\R^n$, put
$L_i=\lin(x_1,\ldots,x_i)$ and $x_0=0$. The following are equivalent:
\begin{enumerate}
 \item $\Pi_{L_i}x_{i+1}=x_i$ for $i\in \{1,\dots,n-1\}$;
 \item $x_i=\Pi_{L_i}x_n$ for $i\in \{1,\dots,n\}$;
 \item the steps $x_i-x_{i-1}$ are mutually orthogonal;
 \item the Gram matrix has staircase form
 \begin{equation}\label{eq:staircase-gram}
  \langle x_i,x_j\rangle=\|x_{\min(i,j)}\|^2,
  \qquad i,j\in\{1,\dots,n\}.
 \end{equation}
\end{enumerate}
In this case, the cone $C=\pos(x_1,\ldots,x_n)=QC(a)$ is a generic orthoscheme, where
\begin{equation}\label{eq:staircase-parameters}
 a_i=\|x_i-x_{i-1}\|>0,\qquad
 Q=\left[\frac{x_1}{a_1}\ \frac{x_2-x_1}{a_2}\ \cdots\
                 \frac{x_n-x_{n-1}}{a_n}\right]\in\On n.
\end{equation}
\end{lemma}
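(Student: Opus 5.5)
I will prove the equivalences in the cycle (1)$\Rightarrow$(2)$\Rightarrow$(3)$\Rightarrow$(4)$\Rightarrow$(1), and then read off the orthonormal frame. Throughout I use that $L_1\subset\cdots\subset L_n$ is a complete flag, since the $x_i$ form a basis, and hence $\Pi_{L_i}\Pi_{L_j}=\Pi_{L_{\min(i,j)}}$.

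\emph{(1)$\Rightarrow$(2).} This is a downward induction. For $i=n$ we have $x_n=\Pi_{L_n}x_n$. If $x_{i+1}=\Pi_{L_{i+1}}x_n$, then
\[
x_i=\Pi_{L_i}x_{i+1}=\Pi_{L_i}\Pi_{L_{i+1}}x_n=\Pi_{L_i}x_n .
\]

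\emph{(2)$\Rightarrow$(3).} For $j<i$ the step $x_j-x_{j-1}$ lies in $L_j\subseteq L_{i-1}$. The step $x_i-x_{i-1}=\Pi_{L_i}x_n-\Pi_{L_{i-1}}x_n$ is the projection of $x_n$ onto $L_i\cap L_{i-1}^\perp$, so it is orthogonal to $L_{i-1}$. Hence the two steps are orthogonal.

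\emph{(3)$\Rightarrow$(4).} Write $x_i=\sum_{k\le i}(x_k-x_{k-1})$ as a telescoping sum. Expanding $\ip{x_i}{x_j}$ and using orthogonality of the steps leaves
\[
\ip{x_i}{x_j}=\sum_{k\le\min(i,j)}\|x_k-x_{k-1}\|^2=\|x_{\min(i,j)}\|^2 .
\]

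\emph{(4)$\Rightarrow$(1).} Fix $i\le n-1$. I need to show $x_{i+1}-x_i\perp L_i$, since this together with $x_i\in L_i$ means exactly that $\Pi_{L_i}x_{i+1}=x_i$. It suffices to test against the spanning vectors $x_j$ with $j\le i$, and for these
\[
\ip{x_{i+1}}{x_j}-\ip{x_i}{x_j}=\|x_j\|^2-\|x_j\|^2=0 .
\]

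\emph{The frame and genericity.} The steps $x_i-x_{i-1}$ are nonzero because the $x_i$ are linearly independent, so $a_i>0$. By (3) the columns of $Q$ are orthonormal, so $Q\in\On n$. Let $q_k$ denote the $k$th column. From $x_i=\sum_{k\le i}a_kq_k=Qx_i(a)$ we get $C=QC(a)$. Moreover $\lin(q_1,\ldots,q_i)=L_i$, so $\mathcal L$ is the flag of $Q$, and by (2) we have $C=C_{\mathcal L}(x_n)$. Genericity holds because $\Pi_{L_i}x_n=x_i\notin L_{i-1}$, again by linear independence.

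Nothing here is a real obstacle. The only point needing care is the direction (4)$\Rightarrow$(1), where the orthogonality to $L_i$ has to be checked against a spanning set of $L_i$; testing it on $x_1,\ldots,x_i$ settles it.
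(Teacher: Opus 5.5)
Your proof is correct and uses the same elementary ingredients as the paper: nesting of projections, summing the mutually orthogonal steps, and testing orthogonality against $x_1,\dots,x_i$. The only difference is that you arrange them as a single cycle of implications rather than routing through (3) as the paper does, and your explicit genericity check via $\Pi_{L_i}x_n=x_i\notin L_{i-1}$ spells out a point the paper leaves implicit.
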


\begin{proof}
Condition (1) says that $x_{i+1}-x_i$ is orthogonal to $L_i$.
The preceding steps span $L_i$, so this is equivalent to (3).
Summing the mutually orthogonal steps gives (2) and (4).
Conversely, (2) gives (1) by nesting projections, and (4) gives (1)
by taking inner products with $x_1,\ldots,x_i$.
The vectors defining $Q$ are orthonormal, and their partial sums with
coefficients $a_i$ are exactly the $x_i$. This proves the last assertion.
\end{proof}


\subsection{The parameter function}
Note that $C(Da)=DC(a)$ for any diagonal matrix $D$. In particular,
\begin{equation}\label{eq:orthoscheme-evenness}
 \mu(QC(a))=\mu(C(|a|))
\end{equation}
for an $\On n$-invariant valuation, $Q\in\On n$ and $a\in\R_*^n$. For full-dimensional orthoschemes and orthogonally invariant valuations,
it is thus sufficient to consider positive parameters. 
For an $\On n$-invariant valuation $\mu$, define
\begin{equation}\label{eq:psi}
 \psi(a)=\mu(C(a)),\qquad a\in \R_{>0}^n.
\end{equation}
Positive common scaling of $a$ does not change $C(a)$, and we therefore use the parameter space
\begin{equation}\label{eq:parameter-chamber}
 \Sigma=\sphere{n-1}\cap\R_{>0}^n.
\end{equation}
The map $a\mapsto C(a)$ is continuous on $\Sigma$, so a continuous valuation has a 
continuous parameter function there.

The following positivity property of orthoschemes ensures that an interior
apex gives a positive flag subdivision.

\begin{lemma}\label{lem:positive}
Let $a\in\R_{>0}^n$ and $u\in\inter C(a)$. For every nonzero face $F$ of $C(a)$,
its orthogonal projection $\Pi_{\lin F}(u)$ lies in $\relint F$.
\end{lemma}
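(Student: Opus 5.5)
We first describe the faces of $C(a)$ and then compute projections in coordinates. Since $a\in\R_{>0}^n$, the generators $x_1(a),\ldots,x_n(a)$ form a basis, so $C(a)$ is simplicial. Its nonzero faces are therefore exactly
\[
 F_S=\pos(x_i(a):i\in S),\qquad \emptyset\ne S\subseteq[n].
\]
A point $u\in\inter C(a)$ has a unique representation $u=\sum_{i=1}^n\lambda_ix_i(a)$ with all $\lambda_i>0$. In coordinates,
\[
 u_j=a_j\sum_{i\ge j}\lambda_i=a_j\Lambda_j,\qquad \Lambda_j:=\sum_{i\ge j}\lambda_i .
\]
Thus $u\in\inter C(a)$ is equivalent to $u_j/a_j$ being strictly decreasing in $j$ and positive. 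More precisely, $\Lambda_1>\Lambda_2>\cdots>\Lambda_n>0$.

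Now fix $S=\{s_1<\cdots<s_k\}$ and set $s_0=0$. The vectors $x_{s_r}(a)$ form a staircase with respect to the blocks $B_r=\{s_{r-1}+1,\ldots,s_r\}$. The steps are
\[
 w_r=x_{s_r}(a)-x_{s_{r-1}}(a)=\sum_{j\in B_r}a_je_j .
\]
These steps have disjoint supports, so they are mutually orthogonal. By Lemma~\ref{lem:staircase-representation}, $F_S$ is a generic orthoscheme inside $\lin F_S=\lin(w_1,\ldots,w_k)$. Because the $w_r$ form an orthogonal basis of $\lin F_S$, the projection is
\[
 \Pi_{\lin F_S}u=\sum_{r=1}^k\frac{\ip{u}{w_r}}{\norm{w_r}^2}\,w_r .
\]
Its coefficient is a weighted average of the $\Lambda_j$ over the block:
\[
 m_r:=\frac{\ip{u}{w_r}}{\norm{w_r}^2}=\frac{\sum_{j\in B_r}a_j^2\Lambda_j}{\sum_{j\in B_r}a_j^2}.
\]

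We then rewrite the projection in the generators of $F_S$. Since $w_r=x_{s_r}-x_{s_{r-1}}$, summation by parts gives
\[
 \Pi_{\lin F_S}u=\sum_{r=1}^k m_rw_r=\sum_{r=1}^{k}(m_r-m_{r+1})\,x_{s_r}(a),
\]
with the convention $m_{k+1}=0$. The projection lies in $\relint F_S$ exactly when all these coefficients are positive, that is, when $m_1>m_2>\cdots>m_k>0$.

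Positivity of $m_k$ is immediate, because each $m_r$ is a convex combination of positive numbers. For the strict decrease, note that $m_r$ is a convex combination of $\{\Lambda_j:j\in B_r\}$, so $m_r\ge\Lambda_{s_r}$. Likewise $m_{r+1}\le\Lambda_{s_r+1}$. The strict inequality $\Lambda_{s_r}>\Lambda_{s_r+1}$ then gives $m_r>m_{r+1}$.

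The main point requiring care is this block-averaging monotonicity together with the summation-by-parts bookkeeping; everything else is routine. The case $k=n$ is trivial, since the projection is $u$ itself. For the zero face there is nothing to prove.
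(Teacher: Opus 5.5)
Your proof is correct and is essentially the paper's argument. Both compute the projection blockwise as $a_j^2$-weighted means of $u_j/a_j$ (your $\Lambda_j$, the paper's $t_j$), with the coordinates beyond $s_k$ forming the terminal zero block, and both conclude from the strict decrease of these means across blocks. The paper reads off membership in $\relint F$ from the inequality description $x_1/a_1\ge\cdots\ge x_n/a_n\ge0$, whereas you use generators and summation by parts. Your explicit bound $m_r\ge\Lambda_{s_r}>\Lambda_{s_r+1}\ge m_{r+1}$ spells out a step the paper states without detail.
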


\begin{proof}
A standard orthoscheme admits the following representation:
\[
 C(a)=\left\{x\in\R^n\colon \frac{x_1}{a_1}\ge\cdots\ge\frac{x_n}{a_n}\ge0\right\}.
\]
Write $t_i=u_i/a_i$, so $t_1>\cdots>t_n>0$.
A face $F$ is obtained by imposing equalities between neighbouring ratios,
possibly with a terminal block fixed at zero. On each consecutive block
$B$ not fixed at zero, orthogonal projection onto $\lin F$ has coordinates
$(\Pi_{\lin F}u)_i=a_i\bar t_B$, where
\begin{equation}\label{eq:projection-positive}
 \bar t_B=\frac{\sum_{i\in B}a_i u_i}{\sum_{i\in B}a_i^2}
         =\frac{\sum_{i\in B}a_i^2t_i}{\sum_{i\in B}a_i^2}.
\end{equation}
The prescribed terminal block is set to zero. Since these are positive
weighted means of the $t_i$, they remain strictly decreasing from one
block to the next and positive. The projection therefore lies in $\relint F$.
\end{proof}

\section{Signed flag decompositions}\label{sec:signed-flags}
Throughout this section, $\mu$ is a valuation on $\PP(\R^n)$ satisfying
\eqref{eq:residual}. Neither continuity nor invariance is needed until
we express the terminal cones through a single parameter function.
We begin with a signed star subdivision identity for valuations that
satisfy~\eqref{eq:residual}.

\begin{lemma}\label{lem:star}
  Let $C=\pos(v_1,\ldots,v_n)$ for a basis $(v_1,\ldots,v_n)$, and let
  $w=\sum_{i=1}^n\lambda_i v_i\ne0$. 
  If $C_i(w)$ denotes $C$ with generator $v_i$ replaced by $w$, then
  \begin{equation}\label{eq:star}
   \mu(C)=\sum_{i=1}^n\sgn(\lambda_i)\,\mu(C_i(w)),
  \end{equation}
  where we set $\sgn(0)=0$.
\end{lemma}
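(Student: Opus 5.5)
Since the identity \eqref{eq:star} is linear in $\mu$ and only uses the valuation property together with \eqref{eq:residual}, I will prove it by a case analysis on the sign pattern of $\lambda$, reducing everything to subdivision additivity (Lemma~\ref{lem:subdivision-additivity}, which applies since \eqref{eq:residual} makes $\mu$ simple). First, terms with $\lambda_i=0$ contribute nothing on the right, and indeed $C_i(w)$ is then lower-dimensional, so $\mu(C_i(w))=0$; so it is consistent to drop those indices. Let $P=\{i:\lambda_i>0\}$ and $N=\{i:\lambda_i<0\}$, with $P\cup N\neq\emptyset$ since $w\ne0$.

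\emph{Case $N=\emptyset$.} Then $w\in C$ lies in the relative interior of the face $\pos(v_i:i\in P)$, and the standard star subdivision of $C$ at $w$ gives $C=\bigcup_{i\in P}C_i(w)$ with pairwise disjoint interiors; Lemma~\ref{lem:subdivision-additivity} yields \eqref{eq:star}. \emph{Case $P=\emptyset$.} Then $-w\in C$. I would consider the cones $C_i(w)$, $i\in N$, together with $C$: the cone $\pos(w,v_1,\dots,v_n)$ contains $w$ and $-w$ hence a line, so its value is zero, and it decomposes both as $C\cup\bigcup_{i\in N}C_i(w)$ (star subdivision at $w$ of the cone generated by $w$ and all $v_j$ with $j\notin N$... ) — more cleanly: the union of the $C_i(w)$ for $i\in N$ with $C$ is the non-pointed cone $\pos(v_j,w)$ subdivided with disjoint interiors, so $0=\mu(C)+\sum_{i\in N}\mu(C_i(w))$, which is \eqref{eq:star} with signs $-1$.

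\emph{Mixed case.} Write $w=w_P-w_N$ with $w_P=\sum_{P}\lambda_iv_i$, $w_N=\sum_N|\lambda_i|v_i$. The general principle is: the fan of cones $\pos(w, v_j: j\neq i)$ for all $i$ with $\lambda_i\neq0$, together with $C$, forms the two ``sides'' of the polyhedral cone $\pos(w,v_1,\dots,v_n)$ (a cone over a circuit in $n+1$ vectors with the unique linear relation $w-\sum\lambda_iv_i=0$). By the classical description of the two triangulations of a circuit (Radon partition), $\pos(w,v_1,\dots,v_n)$ is triangulated both by $\{C_i(w):i\in P\}$ and by $\{C\}\cup\{C_i(w):i\in N\}$. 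Applying Lemma~\ref{lem:subdivision-additivity} to both triangulations and subtracting gives $\mu(C)=\sum_P\mu(C_i(w))-\sum_N\mu(C_i(w))$. If the circuit cone contains a line (which happens when $P=\emptyset$, i.e.\ all coefficients of the relation on one side have the same sign as $w$'s), one side of the triangulation is empty and the value is zero by \eqref{eq:residual}; this recovers the case above, and the $N=\emptyset$ case is the other degenerate form. Indices with $\lambda_i=0$ simply give cones in a common lower-dimensional configuration and are handled by restricting the circuit to the support.

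The main obstacle is justifying the two-triangulation statement for the circuit cone cleanly: I would verify it directly by showing that for $x$ in the interior of $\pos(w,v_1,\dots,v_n)$, the representations $x=\alpha w+\sum\beta_jv_j$ with nonnegative coefficients form a one-parameter family $t\mapsto(\alpha+t,\beta-t\lambda)$, and the extreme endpoints of the admissible $t$-interval pick out exactly one generator to drop — an index in $P$ at one end, and either $w$ (giving $C$) or an index in $N$ at the other. Genericity of $x$ makes the dropped index unique, giving disjoint interiors and the covering property.
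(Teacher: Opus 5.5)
Your proof is correct, but it takes a different route from the paper's.

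The paper proves only the all-positive case by star subdivision. It then reduces an arbitrary sign pattern to that case: each $v_i$, $i\in J$, is replaced by $s_iv_i$ with $s_i=\sgn(\lambda_i)$, so that $w$ lies in the reflected cone. Lemma~\ref{lem:flip} then converts the values of the reflected cone and of its children back into $(\prod_{j\in J}s_j)\mu(C)$ and $(\prod_{j\in J\setminus\{i\}}s_j)\mu(C_i(w))$, and the coefficients $s_i$ drop out of the product.

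You instead treat $\{w\}\cup\{v_i:i\in J\}$ as a circuit with relation $w-\sum_{i\in J}\lambda_iv_i=0$. You compare the two triangulations of $\pos(w,v_1,\ldots,v_n)$ given by its Radon partition, namely $\{C_i(w):i\in P\}$ and $\{C\}\cup\{C_i(w):i\in N\}$. Your one-parameter argument $t\mapsto(\alpha+t,\beta-t\lambda)$ does establish this:
\begin{itemize}
\item the upper end of the admissible interval is attained at an index in $P$;
\item the lower end is attained at $w$ or at an index in $N$;
\item ties occur only on finitely many hyperplanes;
\item indices outside $J$ impose no constraint.
\end{itemize}

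What your route buys is a sharper accounting of the hypotheses. When $P$ and $N$ are both nonempty, the circuit cone is pointed and simplicity alone gives \eqref{eq:star}. The line condition in \eqref{eq:residual} is used only when $P=\emptyset$, where the circuit cone is $\lin(v_i:i\in N)+\pos(v_j:j\notin N)$. The paper, by contrast, invokes the line condition through Lemma~\ref{lem:flip} for every negative coefficient.

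The paper's route buys brevity. It needs only the standard star subdivision and the two-line flip lemma, with no separate verification of the circuit triangulations.

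On presentation: in your case $P=\emptyset$, delete the sentence beginning ``it decomposes both as'' together with its parenthetical. The ``more cleanly'' formulation that follows is the correct statement.
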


The proof requires the following elementary observation.

\begin{lemma}\label{lem:flip}
If $v_1,\ldots,v_n$ is a basis, then
\begin{equation}\label{eq:flip}
 \mu\bigl(\pos(v_1,\ldots,-v_i,\ldots,v_n)\bigr)
 =-\mu\bigl(\pos(v_1,\ldots,v_i,\ldots,v_n)\bigr).
\end{equation}
\end{lemma}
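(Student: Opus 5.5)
The plan is to use the valuation property on the union $C'=\pos(v_1,\ldots,v_{i-1},v_{i+1},\ldots,v_n)\oplus\lin(v_i)$. Write $F=\pos(v_k:k\ne i)$, $C_+=\pos(v_1,\ldots,v_i,\ldots,v_n)$ and $C_-=\pos(v_1,\ldots,-v_i,\ldots,v_n)$. Then
\[
 C_+\cup C_-=F+\lin(v_i),\qquad C_+\cap C_-=F .
\]
For the first identity, any $x=\sum_k c_kv_k$ with $c_k\ge0$ for $k\ne i$ lies in $C_+$ if $c_i\ge0$ and in $C_-$ if $c_i\le0$; the converse inclusion is immediate. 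For the second, uniqueness of coordinates in the basis forces $c_i=0$, which gives exactly $F$. The union $F+\lin(v_i)$ is a closed convex polyhedral cone, so the valuation property applies:
\[
 \mu(C_+)+\mu(C_-)=\mu\bigl(F+\lin(v_i)\bigr)+\mu(F).
\]

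Next, invoke \eqref{eq:residual} twice. The cone $F$ lies in the hyperplane $\lin(v_k:k\ne i)$, so $\dim F=n-1<n$ and $\mu(F)=0$. The cone $F+\lin(v_i)$ contains the line $\lin(v_i)$, so $\mu(F+\lin(v_i))=0$. Hence $\mu(C_+)+\mu(C_-)=0$, which is \eqref{eq:flip}.

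The only point needing care is the decomposition of the union and intersection. It rests solely on linear independence, which gives unique coordinates so that the sign of $c_i$ decides membership. I expect no real obstacle here. The case $n=1$ (union $\R$, intersection $\{0\}$) fits the same scheme, with $\{0\}$ having dimension $0<n$.
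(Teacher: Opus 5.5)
Your proof is correct and follows the paper's argument: the union $C_+\cup C_-$ is a convex cone containing the line $\lin(v_i)$ and the intersection is the lower-dimensional cone $F$, so \eqref{eq:residual} makes both values zero and the valuation identity gives \eqref{eq:flip}. You also spell out the membership and uniqueness-of-coordinates details that the paper leaves implicit; note only that your $\oplus$ is a non-orthogonal direct sum (the paper reserves $\oplus$ for orthogonal sums), so $F+\lin(v_i)$, which you use afterwards, is the better notation.
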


\begin{proof}
The union of $\pos(v_1,\ldots,v_i,\ldots,v_n)$ and $\pos(v_1,\ldots,-v_i,\ldots,v_n)$
is a convex cone containing a line, and their intersection is lower-dimensional.
The union and intersection both have value zero, so the valuation identity gives the claim.
\end{proof}

\begin{proof}[Proof of Lemma~\ref{lem:star}]
Set $J=\{i:\lambda_i\ne0\}$, which is nonempty. First suppose
$\lambda_i>0$ for all $i\in J$.
Stellar subdivision of the face
$\pos(v_i\colon i\in J)$ at $w$, retaining the other generators, is the standard star subdivision
of $C$ into the cones $C_i(w)$, $i\in J$, with disjoint interiors; see~\cite[Lemma~11.1.3]{CLS11:Toric-Varieties}.
For general signs, put $s_i=\sgn(\lambda_i)$ for $i\in J$, 
and replace each generator $v_i$ with $s_i v_i$. 
Then $w=\sum_{i\in J}|\lambda_i|(s_i v_i)$
on the cone with reversed generators. By Lemma~\ref{lem:flip}, 
the value of the valuation on the reversed cone is
$(\prod_{j\in J}s_j)\mu(C)$, whereas its $i$th
child has value $(\prod_{j\in J\setminus\{i\}}s_j)\mu(C_i(w))$.
Multiplication by $\prod_{j\in J}s_j$ proves \eqref{eq:star}.
\end{proof}


We now derive an expression for the value of a valuation on a simplicial cone in terms
of associated orthoschemes. 
Fix any basis $v_1,\ldots,v_n$ and $C=\pos(v_1,\ldots,v_n)$.
Consider the corresponding complete flag $\mathcal{L}$ of subspaces
\[
 L_0=\{0\},\qquad L_k=\lin(v_1,\ldots,v_k).
\]
Gram--Schmidt in this order on the matrix with columnes $v_1,\dots,v_n$ gives the orthogonal factor of the QR
decomposition with positive diagonal:
\begin{equation}\label{eq:frame}
 w_k=v_k-\Pi_{L_{k-1}}(v_k),\qquad
 q_k=\frac{w_k}{\norm{w_k}},\qquad
 Q=[q_1\ \cdots\ q_n].
\end{equation}
Note that the projection of a point u onto the face spanned by ${v_1,\dots,v_k}$ is given by 
\begin{equation*}
\Pi_{L_k}(u) = \sum_{i=1}^k \langle u,q_i\rangle q_i = \sum_{i=1}^k \frac{\langle u,q_i \rangle}{\|w_i\|} (v_i-\Pi_{L_{i-1}}(v_i)).
\end{equation*}
In particular, $\langle \Pi_{L_k}(u), q_k \rangle = \langle u,q_k\rangle \langle v_k,q_k\rangle/\|w_k\|$.
Fix an apex $u$, and define
\begin{equation}\label{eq:projected-flag}
 z_k=\Pi_{L_k}(u)=\sum_{j=1}^k \langle q_j,u\rangle q_j, \qquad K(u)=\pos(z_1,\ldots,z_n).
\end{equation}
If every $\langle q_j,u\rangle \ne0$, then $K(u)=QC(Q^Tu)$ is a full-dimensional orthoscheme,
with positive parameters $|Q^Tu|$ up to congruence.

For a permutation $\pi\in S_n$, a subscript $\pi$ denotes the same construction applied
to the ordered generators $(v_{\pi(1)},\ldots,v_{\pi(n)})$, with $u$ fixed.
Also write $S_{\pi,k}=\{\pi(1),\ldots,\pi(k)\}$.
We call $u$ generic if $\ip{u}{q_{\pi,k}}\ne0$ for every
$\pi,k$. Note that the excluded set of non-generic $u$ is a finite union of hyperplanes.

\begin{theorem}\label{thm:flag}
Let $\mu$ be an $\On n$-invariant valuation that satisfies~\eqref{eq:residual}. Then for every generic $u\in\R^n$,
\begin{equation}\label{eq:flag}
 \mu(C)=\sum_{\pi\in S_n}
  \left(\prod_{k=1}^n\sgn\bigl((Q_\pi^Tu)_k\bigr)\right)
  \psi(|Q_\pi^Tu|).
\end{equation}
\end{theorem}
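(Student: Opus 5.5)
Iterate the signed star identity of Lemma~\ref{lem:star} along flags, peeling off one generator at a time, and then evaluate the terminal orthoschemes through $\psi$ using \eqref{eq:orthoscheme-evenness}. More precisely, I would prove a slightly more general statement by induction on a level $k$, for a fixed simplicial cone $C=\pos(v_1,\ldots,v_n)$ and generic $u$.

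For an injective sequence $\pi(1),\ldots,\pi(k)$, let $z_{\pi,j}=\Pi_{L_{\pi,j}}(u)$ with $L_{\pi,j}=\lin(v_{\pi(1)},\ldots,v_{\pi(j)})$. Consider the cone
\[
 K_{\pi,k}=\pos\bigl(z_{\pi,1},\ldots,z_{\pi,k},v_i: i\notin S_{\pi,k}\bigr).
\]
The claim at level $k$ is
\[
 \mu(C)=\sum_{\pi}\Bigl(\prod_{j=1}^{k}\epsilon_{\pi,j}\Bigr)\mu(K_{\pi,k}),
\]
where the sum runs over injective sequences of length $k$ and $\epsilon_{\pi,j}\in\{\pm1\}$ is an explicit sign. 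At $k=n$ the terminal cones are $K_{\pi,n}=K_\pi(u)=Q_\pi C(Q_\pi^Tu)$. By \eqref{eq:orthoscheme-evenness} their values are $\psi(|Q_\pi^Tu|)$, so the whole task is to identify the accumulated sign with $\prod_k\sgn((Q_\pi^Tu)_k)$.

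The natural ordering of the induction is reversed: start from the full space. At step $0$, write $z_{n}=u$ in the basis $v_1,\ldots,v_n$ and apply Lemma~\ref{lem:star} with $w=u$. This replaces one generator $v_i$ by $u$ and produces the sign $\sgn(\lambda_i)$. The replaced index becomes $\pi(n)$, and the remaining generators span $L_{\pi,n-1}$. I would then repeat inside the facet $\lin(v_j: j\ne \pi(n))$: apply Lemma~\ref{lem:star} to the cone $\pos(u,v_j: j\neq\pi(n))$ with $w=\Pi_{L_{\pi,n-1}}u$, expanded in the basis $u,v_j$. The coefficient of $u$ in this expansion is $0$, so only the $v_j$ are replaced, each with sign $\sgn$ of the coefficient of $v_j$ in the expansion of $\Pi_{L_{\pi,n-1}}u$ in the basis of $L_{\pi,n-1}$. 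Iterating down to $L_{\pi,1}$ yields the flag $z_{\pi,1},\ldots,z_{\pi,n}$. Genericity of $u$ is needed so that all cones stay full-dimensional; terms with a zero coefficient drop out via the convention $\sgn(0)=0$.

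The sign bookkeeping is the main obstacle. At stage $k$ the relevant sign is the sign of the coefficient of $v_{\pi(k)}$ in $z_{\pi,k}\in L_{\pi,k}$ in the basis $v_{\pi(1)},\ldots,v_{\pi(k)}$. By the Gram--Schmidt triangular structure \eqref{eq:frame}, this coefficient equals $\ip{z_{\pi,k}}{q_{\pi,k}}/\norm{w_{\pi,k}}=\ip{u}{q_{\pi,k}}/\norm{w_{\pi,k}}$, using the identity stated before \eqref{eq:projected-flag}. Its sign is therefore $\sgn((Q_\pi^Tu)_k)$, which is exactly the claimed factor.

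One point remains: each star step replaces a generator within the current cone rather than in the original basis, and the unchanged $z$'s must not alter the coefficients. Since the $z_{\pi,j}$ for $j>k$ are fixed vectors outside $L_{\pi,k}$, the coefficient on them vanishes, as noted above. With that, summing over all orderings gives \eqref{eq:flag}. Finally I would check the case where the terminal cone is lower-dimensional. For generic $u$ this does not happen, and in any case such cones contribute $0$ by \eqref{eq:residual}.
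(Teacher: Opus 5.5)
Your proposal is correct and is essentially the paper's proof: top-down iteration of Lemma~\ref{lem:star} with apices $\Pi_{L_{\pi,k}}u$, zero coefficients on the projections already inserted, identification of each sign with $\sgn\ip{u}{q_{\pi,k}}$ via the Gram--Schmidt triangularity, and \eqref{eq:orthoscheme-evenness} for the terminal orthoschemes. The one slip is that your level-$k$ cone $K_{\pi,k}$ is indexed from the bottom of the flag (for instance $z_{\pi,1}\in\lin(v_{\pi(1)})$, so $K_{\pi,1}$ is just $C$ with possibly one generator reversed), whereas the induction you actually run produces the cones generated by $\{v_i\colon i\in S_{\pi,k}\}\cup\{z_{\pi,k+1},\ldots,z_{\pi,n}\}$, summed over the tails $(\pi(k+1),\ldots,\pi(n))$, exactly as in \eqref{eq:intermediate}.
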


\begin{proof}
For each nonempty $S\subseteq\{1,\ldots,n\}$ set $L_S=\lin(v_i\colon i\in S)$ and write
\[
 p_S=\Pi_{L_S}(u)=\sum_{i\in S}\alpha_i^S v_i.
\]
If $S=S_{\pi,k}$ and $i=\pi(k)$, taking the scalar product with
$q_{\pi,k}$ gives
\begin{equation}\label{eq:coefficient}
 \alpha_{\pi(k)}^{S_{\pi,k}}
   =\frac{\ip{u}{q_{\pi,k}}}{\norm{w_{\pi,k}}}\neq 0,
\end{equation}
where the fact that these entries are nonzero follows from genericity.

Apply Lemma~\ref{lem:star} to $C$, with apex
$p_{\{1,\ldots,n\}}=u$:
\begin{equation*}
  \mu(C)=\sum_{i=1}^n\sgn(\alpha_i^{[n]})\mu(C_i(u)).
\end{equation*}

In the term that replaced $v_i$, apply Lemma~\ref{lem:star} with apex
$p_{[n]\setminus\{i\}}$. This replaces each remaining original generator
$v_j$ in turn, with coefficient $\sgn\alpha_j^{[n]\setminus\{i\}}$;
the coefficient of the retained generator $u$ is zero.
Continue this procedure downwards. Along the branch
indexed by $\pi$, the cone at stage $k$ has generators
\begin{equation}\label{eq:intermediate}
 \{v_i\colon i\in S_{\pi,k}\} \cup\
 \{p_{S_{\pi,k+1}},\ldots,p_{S_{\pi,n}}\}.
\end{equation}
The first set of generators spans $L_{\pi,k}$, and the
successive projected vectors have nonzero components $\langle u,q_{\pi,j}\rangle$
in the complementary flag directions $q_{\pi,j}$, $j>k$, hence the set of generators~\eqref{eq:intermediate}
forms a basis. At $k=1$, the remaining original ray is replaced by
$p_{S_{\pi,1}}$ and the one-generator identity contributes
$\sgn\alpha_{\pi(1)}^{S_{\pi,1}}$.

There are exactly $n!$ branches, one for each order of deletion, and
the resulting terminal cone is $K_\pi(u)$. Its coefficient is
\[
 \prod_{k=1}^n\sgn\alpha_{\pi(k)}^{S_{\pi,k}}
 =\prod_{k=1}^n\sgn\ip{u}{q_{\pi,k}},
\]
by \eqref{eq:coefficient}.
Orthogonal invariance gives \eqref{eq:flag}.
\end{proof}


Let $\mu$ be an $\On n$-invariant valuation on $\PP(\R^n)$ satisfying
\eqref{eq:residual}, and let $\psi$ be its orthoscheme restriction. In what 
follows, assume that $\psi|_{\Sigma}\in L^1(\Sigma,\sigma)$, that is, $\psi|_{\Sigma}$ is measurable and
\begin{equation}\label{eq:integrability}
 \int_\Sigma|\psi(a)|\,\mathrm{d}\sigma(a)<\infty.
\end{equation}
For $\eps\in\{-1,1\}^n$ write $D_\eps=\diag(\eps_1,\ldots,\eps_n)$
and $s(\eps)=\prod_k\eps_k$.

\begin{theorem}\label{thm:average}
For every full-dimensional simplicial cone $C$ and $\On n$-invariant valuation $\mu$ on $\PP(\R^n)$ satisfying \eqref{eq:residual}, such that $\psi|_{\Sigma}\in L^1(\Sigma,\sigma)$ for $\psi(a)=\mu(C(a))$,
\begin{equation}\label{eq:average}
 \mu(C)=0. 
\end{equation}
Moreover, $\mu$ vanishes on all polyhedral cones.
\end{theorem}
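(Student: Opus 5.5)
We would integrate the identity of Theorem~\ref{thm:flag} over all apexes $u\in\sphere{n-1}$ and show that every summand averages to zero. Fix a full-dimensional simplicial cone $C=\pos(v_1,\ldots,v_n)$. The set of non-generic $u$ is a finite union of hyperplanes, so it is $\sigma$-null. Theorem~\ref{thm:flag} therefore holds for $\sigma$-almost every $u\in\sphere{n-1}$. Since the left-hand side does not depend on $u$, integration gives
\[
 \mu(C)=\sum_{\pi\in S_n}\int_{\sphere{n-1}}
 \Bigl(\prod_{k=1}^n\sgn\bigl((Q_\pi^Tu)_k\bigr)\Bigr)\psi\bigl(|Q_\pi^Tu|\bigr)\,\mathrm d\sigma(u),
\]
provided each integral exists.

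\emph{Integrability.} Fix $\pi$ and substitute $a=Q_\pi^Tu$. Since $\sigma$ is $\On n$-invariant, the $\pi$-th integral becomes
\[
 \int_{\sphere{n-1}} s(\sgn a)\,\psi(|a|)\,\mathrm d\sigma(a).
\]
Split $\sphere{n-1}$ up to a null set into the $2^n$ open orthants $D_\eps\Sigma$, $\eps\in\{-1,1\}^n$. On $D_\eps\Sigma$ the integrand equals $s(\eps)\psi(D_\eps a)$. The map $a\mapsto D_\eps a$ preserves $\sigma$, so the orthant contributes $s(\eps)\int_\Sigma\psi\,\mathrm d\sigma$. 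This quantity is finite by \eqref{eq:integrability}, and measurability follows the same way. Hence each integral exists, and the whole integrand is absolutely integrable on $\sphere{n-1}$.

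\emph{Cancellation.} The $\pi$-th integral equals $\bigl(\sum_{\eps}s(\eps)\bigr)\int_\Sigma\psi\,\mathrm d\sigma$. The factor $\sum_\eps s(\eps)=\prod_{k=1}^n(1+(-1))$ vanishes since $n\ge1$. Equivalently, the reflection $u\mapsto Q_\pi D_{e}Q_\pi^Tu$ that flips the first coordinate of $Q_\pi^Tu$ preserves $\sigma$ and the absolute values, but reverses the sign product. Each summand is therefore zero, and $\mu(C)=0$.

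\emph{Extension to polyhedral cones.} By Lemma~\ref{lem:subdivision-additivity}, which applies because \eqref{eq:residual} makes $\mu$ simple, every full-dimensional polyhedral cone subdivides into full-dimensional simplicial cones. Each of these has value zero by the argument above. Lower-dimensional cones vanish directly by \eqref{eq:residual}. Hence $\mu$ vanishes on all polyhedral cones.

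The main point needing care is justifying the interchange: Theorem~\ref{thm:flag} holds only for generic $u$, and the right-hand side must be measurable in $u$. The orthant decomposition under the isometric change of variables handles both. The null set of non-generic $u$ together with the coordinate hyperplanes of each $Q_\pi^Tu$ is irrelevant, and integrability of $\psi$ on $\Sigma$ transfers to every orthant.
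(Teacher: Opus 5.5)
Your proposal is correct and follows essentially the same route as the paper. You integrate the flag identity of Theorem~\ref{thm:flag} over $u\in\sphere{n-1}$ and use $\On n$-invariance of $\sigma$ to reduce each summand to $\int_{\sphere{n-1}}H\,\mathrm d\sigma$ with $H(a)=\bigl(\prod_k\sgn a_k\bigr)\psi(|a|)$; splitting into orthants then kills it through $\sum_\eps s(\eps)=0$, and simplicial subdivision extends the result to polyhedral cones. The only cosmetic difference is that you cancel each $\pi$-summand separately instead of first collecting them into $n!\int H\,\mathrm d\sigma$.
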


\begin{proof}
Define, for $a\in\sphere{n-1}$ with all coordinates nonzero,
\[
 H(a)=\left(\prod_{k=1}^n\sgn a_k\right)\psi(|a|).
\]
Set $H=0$ on the coordinate hyperplanes. Then $H\in L^1(\sphere{n-1},\sigma)$ and \eqref{eq:flag} gives
$\mu(C)=\sum_\pi H(Q_\pi^Tu)$ for almost every $u$. Integrating over the sphere, we get
\begin{align*}
  \mu(C) &= \sum_{\pi\in S_n}\int_{\sphere{n-1}}H(Q_\pi^Tu)\,\mathrm{d}\sigma(u) \\
  &= n!\int_{\sphere{n-1}}H(u)\,\mathrm{d}\sigma(u) \\
  &= n!\sum_{\eps\in\{-1,1\}^n}\int_\Sigma H(D_\eps a)\,\mathrm{d}\sigma(a) \\
  &= n!\left(\sum_{\eps\in\{-1,1\}^n}s(\eps)\right)
       \int_\Sigma\psi(a)\,\mathrm{d}\sigma(a)=0.
\end{align*}
Here we used orthogonal invariance of $\sigma$, split the sphere into its
orthants, and used $H(D_\eps a)=s(\eps)\psi(a)$ on $\Sigma$.
The sign sum is zero. The claim for all polyhedral cones
follows by simplicial subdivision.
\end{proof}

\begin{proof}[Proof of Theorem~\ref{thm:simplicial-vanishing}]
By Lemma~\ref{lem:cone-compactness}, $\mu$ is bounded, and its restriction
to the positive orthoscheme chamber is continuous. Thus
\eqref{eq:integrability} holds. Theorem~\ref{thm:average} gives zero on
every full-dimensional pointed simplicial cone.
\end{proof}

\bibliographystyle{alpha}
\bibliography{refs}
\end{document}